\documentclass[article]{amsart}
\usepackage[utf8]{inputenc}

\usepackage{fullpage,hyperref,enumitem}
\usepackage{amsmath,amssymb,mathtools}
\usepackage{float}
\usepackage{pgf}
\usepackage{amsrefs}
\usepackage{amsthm}
\usepackage{mathrsfs}
\usepackage{tikz}
\usetikzlibrary{calc}

\theoremstyle{plain}
\newtheorem{theorem}{Theorem}

\newtheorem{proposition}{Proposition}

\newtheorem{lemma}[proposition]{Lemma}

\theoremstyle{definition}
\newtheorem{definition}{Definition}
\newtheorem{example}{Example}
\newtheorem{remark}[proposition]{Remark}

\newcommand{\seqnum}[1]{\href{https://oeis.org/#1}{\rm \underline{A#1}}}

\usepackage{thmtools} 
\usepackage{thm-restate}

\title{The Combinatorics of Multi-Lane Merging}

\author[Hiveley]{Aurora Hiveley}
\address[A.~Hiveley]{Department of Mathematics, Rutgers University, Piscataway, NJ 08854}
\email{\textcolor{blue}{\href{mailto:aurora.hiveley@rutgers.edu}{aurora.hiveley@rutgers.edu}}}

\author[Zeilberger]{Doron Zeilberger}
\address[D.~Zeilberger]{Department of Mathematics, Rutgers University, Piscataway, NJ 08854}
\email{\textcolor{blue}{\href{mailto:doronzeil@gmail.com}{doronzeil@gmail.com}}}

\begin{document}
\begin{abstract}
We extend the nice treatment of V. Bardenova, E. Insko, K. Johnson, and S. Sullivan of the combinatorics of two-lane mergings to the multi-lane case.

\end{abstract}

\maketitle

\section{Introduction} \label{sec:intro}
Consider a road with two lanes of traffic. The road approaches a stoplight, and shortly after the stoplight the left lane will merge into the right lane. An individual driving a car on this road must make a decision as they approach the stoplight: which lane will they occupy? The driver may prefer to stay in the right lane so that they need not worry about merging, or they may prefer to occupy the lane with fewer cars so that they can make it through the stoplight as quickly as possible. This problem is referred to as a combinatorial lane merging problem, and it has been studied previously by \cite{bard} and \cite{insko}.

In this problem, we will represent each car's preference with a single digit: a 1 if the car prefers the right lane, or a 2 if the car prefers the shorter of the two lanes. In the case of a tie, a car denoted by a 2 will opt for the right lane, so the right lane is always at least as long as the left lane as a result. Observe, then, that at any point during the resolution of the arrival sequence, the lane lengths are weakly decreasing from right to left. A car can only join the left lane if the right lane is strictly longer, so the left lane can never be longer than the right lane. 

\begin{definition}
A 2-lane \textit{arrival sequence} is a binary sequence of 1's and 2's representing the preferences of all cars in a lane merging problem.
\end{definition}

\begin{example} \label{ex:arrseq}
Let $A$ be the arrival sequence $A = 11222121$. Car 1's preference is the right lane, so Car 1 moves to the right lane. Car 2's preference is also the right lane, so it also joins the right lane. Car 3 prefers the shorter of the two lanes, and since no cars are in the left lane, Car 3 joins the left lane. The same occurs for Car 4. Then Car 5 prefers the shortest lane, but both lanes have two cars in them, so Car 5 opts for the right lane to resolve the tie. Next, Car 6 moves to the right lane, and Car 7 will go to the left lane since there are only two cars there compared to four cars in the right lane. Lastly, Car 8 goes to the right lane. Then the final configuration is $[1,2,5,6,8]$ in the right lane and $[3,4,7]$ in the left.
\end{example}

Bardenova et al. \cite{bard} is primarily concerned with computing the expected length of the right lane for an arrival sequence of length $n$ (of course, the length of the left then follows immediately). In their work, they represent a lane merging problem with a 2-dimensional lattice path, where the vertical component is incremented when a car joins the right lane and the horizontal component is incremented when a car joins the left lane. Then an arrival sequence with $y$ cars in the right lane and $x$ cars in the left lane will end at the point $(x,y)$. By counting these lattice paths, the authors were able to determine a recurrence and eventually a closed formula for the number of cars $y$ in the right lane after the sequence has fully resolved. From there, computing the expectation and studying the asymptotics is straightforward. Additional analysis of the lane merging problem with limits on the capacity of each lane was also discussed in \cite{insko}. 

The authors of \cite{bard} also propose a generalization of their work which considers three lanes of traffic. They suggest extending the definition of an arrival sequence as follows:

\begin{itemize}
    \item A 1 denotes a car whose preference is the right lane. 
    \item A 2 denotes a car whose preference is the shorter of the two rightmost lanes.
    \item A 3 denotes a car whose preference is the shortest of all three lanes.
\end{itemize}

And, in fact, we can generalize this extension even further to the case of $k$ lanes of traffic. We will define an arrival sequence like so:

\begin{definition} \label{def:gen_arr_seq}
    An \textit{arrival sequence} is a sequence in $\{1,\dots, k\}^n$ where each digit represents a car's preference. A 1 represents a car which will only enter the right lane, a 2 will enter the shortest of the two rightmost lanes (opting for the right in case of a tie), and generally an $i$ will enter the shortest of the $i$ rightmost lanes, opting for the rightmost tied lane in the case of a tie.
\end{definition}

This generalization will be the focus of our work, and henceforth we will always assume that an arrival sequence has $k$ lanes and length $n$. Now that our sequences have more than two lanes, rather than focusing on the expected length of a specific lane, we will instead calculate what would be the endpoint of a sequence's $k$-dimensional lattice path. We will define the final configuration of an arrival sequence as follows.

\begin{definition}
    An arrival sequence \textit{resolves} to the \textit{end state} $(x_1,x_2, \dots, x_k)$ if there are $x_1$ cars in the right lane, $x_2$ cars in the second lane, etc. after all cars have merged according to their preferences.
\end{definition}

Then our primary goal in this paper is to count the number of arrival sequences which resolve to a given endstate, $(x_1, x_2, \dots, x_k)$. Certainly if we can perform these calculations, we may also compute the statistic such as the expected length of an individual lane. Our main result is given in Theorem \ref{thm:arr_cf}.

\newtheorem*{thm:main}{Theorem \ref{thm:arr_cf}}
\begin{thm:main}
The number of $k$-lane arrival sequences with end state $(x_1, x_2, \dots, x_k)$ (where $x_i$ is the number of cars in the $i$-th lane from the right) are counted by the closed form
\[ F(x_1, \dots, x_k) = \binom{n}{x_1, \dots, x_k} \cdot \binom{k}{p_1, \dots, p_r} \]
where $p_1, p_2, \dots, p_r$ is the partition of $k$ defined by $(x_1, \dots, x_k)$ in frequency notation.
\end{thm:main}

In this paper, we will begin by considering an alternative visualization of arrival sequences using standard Young tableaux (henceforth called SYT) rather than lattice paths. Then, we will use the properties of SYT to count the number of arrival sequences which map to a given tableau, and then discuss an equinumerous
enumeration of majority voting sequences. This equinumeracy
allows us to derive the above closed formula which counts the number of arrival sequences with a given end state. Finally, we will use this formula to compute some lane length statistics for sequences with ranging numbers of lanes and lengths.

\section{Visualizations with Standard Young Tableaux} \label{sec:syt}

Previously, Bardenova et al. \cite{bard} mapped two-lane arrival sequences to lattice paths in two dimensions. In these paths, the vertical component was incremented whenever a car joined the right lane, and the horizontal component was incremented whenever a car joined the left lane. In this sense, the lattice path never crosses below the diagonal since the left lane can never be longer than the right lane. As we generalize this work to three or more lanes, we note the difficulties visualizing and drawing lattice paths in three or more dimensions. So, we instead pivot to a different visual representation: a standard Young tableau. 

\subsection{Basics of Standard Young Tableau} \label{sub:SYT_basics}

A Young diagram, sometimes called a Ferrers diagram when represented with dots instead of boxes, is a left-justified arrangement of boxes in rows such that no row is longer than any row above it. A standard Young tableau (SYT) is a Young diagram whose boxes have been populated with the integers $\{1, 2, \dots, n\}$ where $n$ is the number of cells in the diagram, each row is increasing from left to right, and each column is increasing from top to bottom \cite{zeil}. Figure \ref{fig:ferrers&syt} depicts a Ferrers diagram and one possible SYT that can be obtained from its population with numbers. The shape, or underlying Ferrers diagram, of a SYT can be written $L = (l_1, l_2, \dots, l_k)$ where $l_i$ is the length of the $i$-th row of the SYT and $l_i \geq l_j$ for all $i < j$. The number of SYT of a given shape is well understood and has been extensively studied in \cite{stanley} and \cite{zeil}, to name just two of many existing sources.

\begin{figure}[h!] 
\centering

\begin{tikzpicture}
\pgfmathsetmacro{\R}{0.35} % set box radius

%%% YOUNG DIAGRAM
% reference node
\node (r) at (0,1.5) {};

% set the nodes in each row 
\foreach \k [count=\ki] in {4,2,1}
\foreach \i in {1,...,\k}
{
% locate node at center of box 
\pgfmathtruncatemacro{\ii}{\i - 1}
\node (r\i) at ($(r)+(\ii*2*\R,-\ki*2*\R)$) {};

% draw box around node
\draw ($(r\i)+(-\R,-\R)$)--($(r\i)+(-\R,\R)$);
\draw ($(r\i)+(-\R,\R)$)--($(r\i)+(\R,\R)$);
\draw ($(r\i)+(\R,\R)$)--($(r\i)+(\R,-\R)$);
\draw ($(r\i)+(\R,-\R)$)--($(r\i)+(-\R,-\R)$);
}

\node (t) at (3.5,0) {$\implies$};

%%% STANDARD YOUNG TABLEAU
% cell labels, read across each row 
\def\array{{{1,3,5,6},{2,7},{4}}}

% reference node
\node (s) at (5,1.5) {};

% set the nodes in each row 
\foreach \k [count=\ki] in {4,2,1}
\foreach \i in {1,...,\k}
{
% locate node at center of box 
\pgfmathtruncatemacro{\ii}{\i - 1}
\pgfmathtruncatemacro{\kii}{\ki-1}

\node (s\i) at ($(s)+(\ii*2*\R,-\ki*2*\R)$) {\pgfmathparse{\array[\kii][\ii]}\pgfmathresult};

% draw box around node
\draw ($(s\i)+(-\R,-\R)$)--($(s\i)+(-\R,\R)$);
\draw ($(s\i)+(-\R,\R)$)--($(s\i)+(\R,\R)$);
\draw ($(s\i)+(\R,\R)$)--($(s\i)+(\R,-\R)$);
\draw ($(s\i)+(\R,-\R)$)--($(s\i)+(-\R,-\R)$);
}

\end{tikzpicture}
\caption{An example Young diagram on $n=7$ (left) and the same Young diagram filled in to form a standard Young tableau (right) of shape $L=(4,2,1)$.}
\label{fig:ferrers&syt}
\end{figure}

Since the lanes in our arrival sequences must all be weakly decreasing in length from right to left, the rows of a SYT are ideal for representing the final configuration of lanes in an arrival sequence. The cars arrive at the intersection in numerical order, so of course each row will be increasing. Furthermore, a car can only join a lane further to the left if there are at least that many cars in all lanes to the right, so the columns will also be increasing. We also note that using a SYT to represent an arrival sequence offers greater ease of visualization since an additional lane of traffic is modeled with a new row of the tableau rather than a new dimension for the lattice path to traverse.

\subsection{Tableau Enumeration} \label{sub:syt_enumer}
The original map from two-lane arrival sequences to lattice paths was not one-to-one. For example, regardless of what the first car's preference is, it will always join the right lane. Then there are at least two sequences which map to the same lattice path: the sequence $1 + S$ and the sequence $2 + S$, where $S$ is the rest of the sequence. The precise enumeration of how many sequences map to a lattice path is detailed in \cite{bard} in an effort to aid the enumeration of arrival sequences' ending configurations. 

Our new mapping to SYT is no different: there are still several arrival sequences which map to a single SYT. Crucially, the multiplicity of each visualization is quantified by considering each time that a tie break occurs. In Bardenova et al. \cite{bard}, the authors define a ``bounce" as a place where the lattice path makes contact with the diagonal and a tie break forces the lattice path to move upwards, effectively ``bouncing" off of the main diagonal. In more general terms, we say:

\begin{definition}
    A \textit{bounce} is a location in the arrival sequence where a car with preference $i > 1$ opts for a lane further to the right of $i$ because of a tie between lane lengths. 
\end{definition}

In the language of SYT, we see that a ``bounce" occurs when two or more rows have the same length, and the $n$-th car is added to the highest-up row with a tied length. Consider the resolution of the arrival sequence $12131233$ illustrated in Figure \ref{fig:bounceSYT}. While Car 8 is willing to join whichever lane is the shortest, since lanes 2 and 3 are tied in length with two cars each, Car 8 ``bounces" and lands in lane 2.

\begin{figure}[h!]
\begin{tikzpicture}
\pgfmathsetmacro{\R}{0.35} % set box radius

%%% STANDARD YOUNG TABLEAU
% cell labels, read across each row 
\def\array{{{1,3,5},{2,6},{4,7}}}

% reference node
\node (r) at (0,1.5) {};

% set the nodes in each row 
\foreach \k [count=\ki] in {3,2,2}
\foreach \i in {1,...,\k}
{
% locate node at center of box 
\pgfmathtruncatemacro{\ii}{\i - 1}
\pgfmathtruncatemacro{\kii}{\ki-1}

\node (r\i) at ($(r)+(\ii*2*\R,-\ki*2*\R)$) {\pgfmathparse{\array[\kii][\ii]}\pgfmathresult};

% draw box around node
\draw ($(r\i)+(-\R,-\R)$)--($(r\i)+(-\R,\R)$);
\draw ($(r\i)+(-\R,\R)$)--($(r\i)+(\R,\R)$);
\draw ($(r\i)+(\R,\R)$)--($(r\i)+(\R,-\R)$);
\draw ($(r\i)+(\R,-\R)$)--($(r\i)+(-\R,-\R)$);
}

\node (t) at (3,0) {$\implies$};

%%% STANDARD YOUNG TABLEAU
% cell labels, read across each row 
\def\array{{{1,3,5},{2,6},{4,7}}}

% reference node
\node (s) at (4.5,1.5) {};

% set the nodes in each row 
\foreach \k [count=\ki] in {3,2,2}
\foreach \i in {1,...,\k}
{
% locate node at center of box 
\pgfmathtruncatemacro{\ii}{\i - 1}
\pgfmathtruncatemacro{\kii}{\ki-1}

\node (s\i) at ($(s)+(\ii*2*\R,-\ki*2*\R)$) {\pgfmathparse{\array[\kii][\ii]}\pgfmathresult};

% draw box around node
\draw ($(s\i)+(-\R,-\R)$)--($(s\i)+(-\R,\R)$);
\draw ($(s\i)+(-\R,\R)$)--($(s\i)+(\R,\R)$);
\draw ($(s\i)+(\R,\R)$)--($(s\i)+(\R,-\R)$);
\draw ($(s\i)+(\R,-\R)$)--($(s\i)+(-\R,-\R)$);
}

\node (ss) at ($(s) + (4*\R,-4*\R)$) {8}; 
\draw [dashed] ($(s) + (3*\R,-5*\R)$)--($(s) + (5*\R,-5*\R)$);
\draw [dashed] ($(s) + (5*\R,-3*\R)$)--($(s) + (5*\R,-5*\R)$);

\draw [stealth-] ($(s) + (4*\R,-5.5*\R)$)--($(s) + (4*\R,-6.5*\R)$);

\end{tikzpicture}
\caption{Resolution of Car 8 in the arrival sequence $12131233$, which ``bounces" into lane 2.}
\label{fig:bounceSYT}
\end{figure}

Then to count the number of arrival sequences $\#A(T)$ which resolve to a single SYT $T$, we must keep track of the number of bounces which occur during its resolution, and the number of rows which are tied at each bounce. 

\begin{lemma} \label{lem:bounce}
    If $T$ is a SYT and $T'$ is the SYT obtained by deleting the largest element, $n$, from cell $c$ of $T$, then
\[ \frac{\#A(T)}{\#A(T')} = \beta(c)\]
where
    $$\beta(c) := \begin{cases}
    \# \left\{ c' = [i', j-1] \mid i' > i, T[c'] < T[c]  \right\} & j > 1 \\
    k - i & j = 1
    \end{cases}$$
\end{lemma}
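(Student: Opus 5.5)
The plan is to show that the ratio $\#A(T)/\#A(T')$ is the number of preferences the last car can have, and then to identify that number with $\beta(c)$. First, note that an arrival sequence of length $n$ resolving to $T$ is the same as an arrival sequence $a_1\cdots a_{n-1}$ resolving to $T'$ followed by a last preference $a_n\in\{1,\dots,k\}$ that sends car $n$ into cell $c$. The placement of car $n$ depends only on the lane lengths just before it arrives, namely the shape of $T'$, and not on which sequence produced $T'$. Hence
\[ \#A(T)=\#A(T')\cdot \#\{p\in\{1,\dots,k\} : \text{a car of preference } p \text{ added to } T' \text{ lands in } c\}. \]
It then suffices to show that the cardinality on the right equals $\beta(c)$.

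Write $c=[i,j]$, so that row $i$ of $T'$ has $j-1$ cells. A car with preference $p$ chooses the shortest of rows $1,\dots,p$, taking the topmost (rightmost lane) in a tie. It lands in row $i$ exactly when three conditions hold. First, $p\ge i$. Second, every row above row $i$ is strictly longer than $j-1$. This is automatic because $T$ is a SYT: row $i-1$ of $T$ contains $[i-1,j]$, and that cell is not $c$. Third, no row $i'$ with $i<i'\le p$ is strictly shorter than $j-1$. Since rows of $T'$ weakly decrease, the rows below $i$ that have length exactly $j-1$ form a contiguous block $i+1,\dots,i+t$. Therefore the admissible preferences are exactly $p=i,i+1,\dots,i+t$, forming an interval of consecutive values. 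No bounce below row $i+t$ is possible, because row $i+t+1$ is shorter and would capture the car.

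The remaining step is to read this interval as $\beta(c)$, and I expect it to be the main obstacle, since it is purely a matter of the indexing convention for cells $[i,j]$. If $j=1$, every row below $i$ is empty, so all rows from $i$ down to the last lane are tied. With lanes indexed $0,\dots,k-1$ as the formula $k-i$ presupposes, the number of admissible preferences is $k-i$, matching the second case of $\beta$. If $j>1$, a row $i'$ in the tied block is characterized by having a cell in column $j-1$ whose entry is less than $n$, which every entry of $T'$ is, and by having no cell in column $j$. So the admissible preferences correspond to the cells $c'=[i',j-1]$ with $T[c']<T[c]$ in the rows of the tied block, which is exactly the set counted by $\beta(c)$.

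The point I would check most carefully is the boundary of that count: whether the row of $c$ itself, which contributes the preference $p=i$, is included under the paper's indexing. I would verify this consistently against the $j=1$ case, where the count $k-i$ must include $p=i$, and against the tie in Figure \ref{fig:bounceSYT}. There car $8$ has two admissible preferences, $2$ and $3$, since rows $2$ and $3$ are tied at length $2$. With this matching in place, the displayed identity gives the lemma.
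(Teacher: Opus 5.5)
Your route is the paper's: the ratio is the number of last preferences that send car $n$ into $c$, and that number is read off from the row lengths of $T'$. You justify the factorization $\#A(T)=\#A(T')\cdot\#\{p\}$ more carefully than the paper does, by noting that car $n$'s placement depends only on the shape of $T'$. Your description of the admissible preferences as the interval $p=i,i+1,\dots,i+t$ is also correct. For the ratio to make sense you should note that $\#A(T')\ge 1$. For example, letting every car's preference equal its row index realizes any SYT with at most $k$ rows.

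The problem is the final identification with $\beta(c)$, which as written does not hold.
\begin{itemize}
\item Your interval has $t+1$ elements. The set in the $j>1$ branch of $\beta$ requires $i'>i$ strictly, so it consists only of $[i+1,j-1],\dots,[i+t,j-1]$ and has $t$ elements. Row $i$ itself, which supplies $p=i$, is left out.
\item So the tied block is not ``exactly the set counted by $\beta(c)$''. The $0$-indexing you invoke to rescue $k-i$ in the $j=1$ branch does nothing for this strict inequality, so your two branches are matched under incompatible conventions.
\item The check you propose exposes this. For car $8$ in Figure~\ref{fig:bounceSYT}, the literal $\beta([2,3])$ counts only the cell $[3,2]$ and equals $1$, while there are $2$ admissible preferences.
\end{itemize}

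The resolution is that the displayed $\beta$ has an off-by-one slip. The paper's proof makes the same slip, writing $k-i$ just after observing that car $1$ has $k$ choices. Number the rows $1,\dots,k$, as in the paper's worked example, where $k=3$ gives $\beta([1,1])=3$ and $\beta([1,2])=2$. Then the correct definition is $\#\{c'=[i',j-1] : i'\ge i,\ T[c']<T[c]\}$ for $j>1$, and $k-i+1$ for $j=1$.

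That corrected statement is exactly what your count $t+1$ proves, since in the first-column case all rows $i,\dots,k$ are empty and $t=k-i$. State the corrected version and conclude with it, instead of asserting agreement with the literal formula.
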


\begin{proof}
    Note that $\#A(T)/\#A(T')$ is equal to the number of possible preferences that could be in position $n$ of the arrival sequence that would produce this SYT. Let $n$ be in cell $[i,j]$ of tableau $T$, meaning that Car $n$ joined lane $i$ in the arrival sequence for $1 \leq i \leq k$. It suffices to count the number of possible preferences could Car $n$ have had which cause it to join lane $i$.
    
    Certainly if Car $n$'s preference is any digit smaller than $i$, then lane $i$ would never be considered by the car, so we can narrow our search space to $\{i, i+1, \dots, k\}$. However, a preference of $i+1$ will only cause Car $n$ to join lane $i$ if the lanes $i$ and $i+1$ have tied lane lengths. In the tableau $T$, this means that there are already digits occupying rows $i$ and $i+1$ in the column to the left of the element $n$. Extrapolating to lower rows, we see that a preference of $i + r$ will only cause Car $n$ to join lane $i$ if \textit{all} lanes $\{i, i+1, \dots, i+r\}$ have tied lengths, so all rows $\{i, i+1, \dots, i+r\}$ of $T$ have entries in the column to the left of the box containing $n$. Then the number of possible preferences that Car $n$ could have which would cause it to join lane $i$ is equal to $\beta(c)$ where $c = [i,j]$ is the location of Car $n$. In this sense, $\beta(c)$ counts the number of dimensions that the merging path would ``bounce" in, or the number of possible digits that Car $n$ could have that would still cause it to enter lane $i$ and thus fill location $[i,j]$ in $T$. Once we have determined the number of possible digits in position $n$ of an arrival sequence which resolves to $T$, we can ``peel off" $n = T[c]$ from $T$, and consider all possible preferences that Car $n-1$ can have in the same fashion. 
    
    Note, however, that any entry in the first column of $T$ has no column to the left to compare to. But the first car to join any lane can prefer any lane to the left, and it will still join lane $i$. For example, as noted before, Car 1 can prefer any of the $k$ lanes, and it will always join the first lane. If Car 2 joins the second lane, then it could have preferred any of the $k-1$ open lanes to the left, and it would still join lane 2. In general, a car in the first column and the $i$-th row could have $k-i$ preferences corresponding to the empty lanes to the left. Hence for $c = [i,j]$ when $j=1$, $\beta(c)$ is defined differently, as if the zero-th column is full.
\end{proof}

\begin{proposition} \label{prop:syt_enumer}
    The number of $k$-lane arrival sequences which resolve to a SYT $T$ with $k$ rows is given by:
    $$\#A(T) = \prod_{c \in T} \beta(c)$$
    where $c = [i,j]$ is a cell in the tableau $T$ and $\beta(c)$ is as defined in Lemma \ref{lem:bounce}.
\end{proposition}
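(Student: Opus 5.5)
We argue by induction on $n$, the number of cells of $T$, peeling off the largest entry repeatedly and applying Lemma \ref{lem:bounce} at each step.

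\emph{Base case.} Take $n=0$. The empty tableau arises from exactly one arrival sequence, the empty one. The empty product is also $1$, so the formula holds.

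\emph{Inductive step.} Let $T$ have $n \ge 1$ cells and let $c$ be the cell containing $n$. Since $n$ is the largest entry, $c$ is a corner of $T$. Deleting it leaves a standard Young tableau $T'$ on $\{1,\dots,n-1\}$, still regarded as a $k$-lane configuration (some rows may now be empty). By Lemma \ref{lem:bounce},
\[ \#A(T) = \beta(c)\,\#A(T'). \]
By the induction hypothesis,
\[ \#A(T') = \prod_{c' \in T'} \beta_{T'}(c'). \]

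\emph{Matching the factors.} To finish, we must show that $\beta_{T'}(c') = \beta_T(c')$ for every cell $c' \neq c$. Then the two products combine to give $\prod_{c\in T}\beta(c)$.

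For $c'=[i',1]$ in the first column, $\beta(c') = k-i'$ depends only on the row index, so it is unchanged. For $c'=[i',j']$ with $j'>1$, $\beta(c')$ counts cells $[i'',j'-1]$ with $i''>i'$ and entry smaller than $T[c']$. Only cells with entries less than $T[c'] < n$ can contribute, so the cell $c$ (entry $n$) is never counted. Hence removing $c$ leaves $\beta(c')$ unchanged.

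A cleaner way to phrase this is that $\beta(c')$ depends only on the sub-tableau of entries at most $T[c']$, which is identical in $T$ and $T'$. Equivalently, $\beta(c')$ is the factor Lemma \ref{lem:bounce} assigns when $T[c']$ is the largest entry of that sub-tableau.

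\emph{Main obstacle.} The only delicate point is the justification of Lemma \ref{lem:bounce} itself, which asserts that every arrival sequence for $T'$ extends to exactly $\beta(c)$ sequences for $T$. This count must be independent of which sequence for $T'$ we extend. It holds because the lane lengths after $n-1$ cars are determined by the shape of $T'$ alone, so the set of admissible last preferences depends only on $T'$ and $c$.

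One further check: $\beta(c)$ should count rows tied with row $i$ directly below it, and this must agree with the definition via "entries less than $T[c]$ in column $j-1$". The lemma as stated already handles this, so no new argument is needed beyond the induction.
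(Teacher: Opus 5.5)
Your proposal is correct and is essentially the paper's proof: the paper also peels off the largest entry repeatedly and multiplies the factors $\beta(c)$ supplied by Lemma~\ref{lem:bounce}, and your induction on $n$ makes that iteration formal. Your explicit check that $\beta_{T'}(c')=\beta_T(c')$ for every $c'\neq c$ is a step the paper leaves implicit, and it holds just as well for the form of $\beta$ the paper actually uses in its example: $k-i+1$ in the first column, and counting rows $i'\ge i$ rather than $i'>i$. In both forms $\beta(c')$ depends only on entries smaller than $T[c']$, so deleting the cell containing $n$ cannot change it.
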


\begin{proof}
    We will count the number of arrival sequences which can be reverse engineered from a tableau $T$. Start first with the final cell added to the tableau, or the final car to choose a lane: $n$ in cell $c$. By Lemma \ref{lem:bounce}, there are $\beta(c)$ possible $n$-th digits of the arrival sequence which would produce the tableau $T$. 

    We then repeat this process for the largest cell of $T'$, which is $n-1$ in cell $c'$, and so on. At each iteration, we peel off a cell $c^\ast$ and use $\beta(c^\ast)$ to count the number of preferences that that car could have. After repeating this for all cells $c$ (and thus all cars), we have that the number of possible arrival sequences can be obtained from the product of the number of possible preferences for each car, which is $\prod_{c \in T} \beta (c)$. 
\end{proof}

\begin{example}
Consider the SYT below. We calculate $\beta(c)$ for each of the five cells as follows:

\begin{center}
\begin{tikzpicture}
\pgfmathsetmacro{\R}{0.35} % set box radius
%%% STANDARD YOUNG TABLEAU
% cell labels, read across each row 
\def\array{{{1,3,5},{2},{4}}}

% reference node
\node (r) at (0,0) {};

% set the nodes in each row 
\foreach \k [count=\ki] in {3,1,1}
\foreach \i in {1,...,\k}
{
% locate node at center of box 
\pgfmathtruncatemacro{\ii}{\i - 1}
\pgfmathtruncatemacro{\kii}{\ki-1}

\node (r\i) at ($(r)+(\ii*2*\R,-\ki*2*\R)$) {\pgfmathparse{\array[\kii][\ii]}\pgfmathresult};

% draw box around node
\draw ($(r\i)+(-\R,-\R)$)--($(r\i)+(-\R,\R)$);
\draw ($(r\i)+(-\R,\R)$)--($(r\i)+(\R,\R)$);
\draw ($(r\i)+(\R,\R)$)--($(r\i)+(\R,-\R)$);
\draw ($(r\i)+(\R,-\R)$)--($(r\i)+(-\R,-\R)$);
}

%%%%%%%%%%%%%%%%%%%%%%%%%%%%%
\node at ($(r) + (-4,-4*\R)$) {\begin{tabular}{c}
$\beta([1,1]) = 3$ \\ $\beta([1,2]) = 2$ \\ $\beta([1,3]) = 1$ \\ 
$\beta([2,1]) = 2$ \\ $\beta([3,1]) = 1$ 
\end{tabular} };
\end{tikzpicture} 
\end{center}

Then $\#A(T) = 3 \cdot 2 \cdot 1 \cdot 2 \cdot 1 = 12$. The twelve 3-lane arrival sequences which resolve to this SYT are:
\[\begin{array}{cccc}
    12131  & 13131 & 12231 & 13231 \\
    22131  & 23131 & 22231 & 23231 \\
    32131  & 33131 & 32231 & 33231 \\
\end{array}\]
\end{example}

\begin{remark}
    The product $\prod_{a \in T} \beta(a)$ where $a = [i,1]$ for some $1 \leq i \leq k$ is:
    $$\prod_{a \in T} \beta(a) = k \cdot (k-1) \cdot \dots \cdot (k-m+1) = k^{\underline{m}}$$
    where $m$ is the number of rows filled in $T$. This means that $\#A(T)$ will always be divisible by $k$. 
    
    Note that there may be empty lanes in an arrival sequence, which translate to empty rows in a SYT. For example, if $A$ is a 3-lane arrival sequence which contains only 1's and 2's, then cars will only merge into one of the two rightmost lanes, and thus the resulting SYT will only have numbers filled in to the top two rows, so $m=2$. However, when determining the number of possible preference digits from a SYT with two occupied rows of three eligible, we must account for the fact that a preference of 3 is allowed, and may even produce the same tableau if a car in the first column has a preference of 3. Hence, we use a falling factorial rather than a standard factorial to account for the possibility of $k$ total rows. This nuance is also reflected in the output of procedures such as \texttt{SYT(k,n)} and \texttt{laneLens(k,A)} in the Maple package linked in the Appendix \ref{sec:appen}.
\end{remark}

\section{Majority Voting Sequences} \label{sec:voting}

Before presenting the statement and proof of Theorem \ref{thm:arr_cf}, we observe that there is a tidy relationship between arrival sequences and another type of sequence.

\begin{definition} \label{def:vote_seq}
    A \textit{majority voting sequence} $V \in \{1,\dots,k\}^n$ is a sequence such that each digit represents a vote cast for one of $k$ total candidates. A \textit{state} of the voting sequence $(p_1, p_2, \dots, p_k)$ is defined such that after $\sum_{i=1}^k p_i$ votes have been cast, there are $p_1$ votes for the most popular candidate, $p_2$ votes for the second most popular, and so on.
\end{definition}

\subsection{SYT for Voting Sequences} \label{sub:voting_syt}
To compare the outcomes of voting sequences with the outcomes of arrival sequences, we seek to also represent voting sequences with SYT. However, the difficulty comes from the fact that a majority sequence does not immediately resolve to a SYT. Consider the following example: 

\begin{example} \label{ex:}
    Let $V$ = 111223333. If we construct an SYT by letting all of the votes for Candidate 1 fall into row 1, and the votes for Candidate 2 into row 2, and so on, then we obtain the leftmost tableau in Figure \ref{fig:voting_syt}. Of course, this \textit{not} a SYT as the row lengths are not weakly decreasing. Even if we sort the rows by length so that they are weakly decreasing, as in the middle tableau of Figure \ref{fig:voting_syt}, the columns will fail to be increasing. 
\end{example}
\begin{figure}[h!]   
\begin{tikzpicture}
\pgfmathsetmacro{\R}{0.35} % set box radius

% cell labels, read across each row 
\def\array{{{1,2,3},{4,5},{6,7,8,9}}}

% reference node
\node (r) at (0,0) {};

% set the nodes in each row 
\foreach \k [count=\ki] in {3,2,4}
\foreach \i in {1,...,\k}
{
% locate node at center of box 
\pgfmathtruncatemacro{\ii}{\i - 1}
\pgfmathtruncatemacro{\kii}{\ki-1}

\node (r\i) at ($(r)+(\ii*2*\R,-\ki*2*\R)$) {\pgfmathparse{\array[\kii][\ii]}\pgfmathresult};

% draw box around node
\draw ($(r\i)+(-\R,-\R)$)--($(r\i)+(-\R,\R)$);
\draw ($(r\i)+(-\R,\R)$)--($(r\i)+(\R,\R)$);
\draw ($(r\i)+(\R,\R)$)--($(r\i)+(\R,-\R)$);
\draw ($(r\i)+(\R,-\R)$)--($(r\i)+(-\R,-\R)$);
}

%%%% 
% cell labels, read across each row 
\def\array{{{6,7,8,9},{1,2,3},{4,5}}}

% reference node
\node (s) at (4.5,0) {};

% set the nodes in each row 
\foreach \k [count=\ki] in {4,3,2}
\foreach \i in {1,...,\k}
{
% locate node at center of box 
\pgfmathtruncatemacro{\ii}{\i - 1}
\pgfmathtruncatemacro{\kii}{\ki-1}

\node (s\i) at ($(s)+(\ii*2*\R,-\ki*2*\R)$) {\pgfmathparse{\array[\kii][\ii]}\pgfmathresult};

% draw box around node
\draw ($(s\i)+(-\R,-\R)$)--($(s\i)+(-\R,\R)$);
\draw ($(s\i)+(-\R,\R)$)--($(s\i)+(\R,\R)$);
\draw ($(s\i)+(\R,\R)$)--($(s\i)+(\R,-\R)$);
\draw ($(s\i)+(\R,-\R)$)--($(s\i)+(-\R,-\R)$);
}

%%%% 
% cell labels, read across each row 
\def\array{{{1,2,3,9},{4,5,8},{6,7}}}

% reference node
\node (t) at (9,0) {};

% set the nodes in each row 
\foreach \k [count=\ki] in {4,3,2}
\foreach \i in {1,...,\k}
{
% locate node at center of box 
\pgfmathtruncatemacro{\ii}{\i - 1}
\pgfmathtruncatemacro{\kii}{\ki-1}

\node (t\i) at ($(t)+(\ii*2*\R,-\ki*2*\R)$) {\pgfmathparse{\array[\kii][\ii]}\pgfmathresult};

% draw box around node
\draw ($(t\i)+(-\R,-\R)$)--($(t\i)+(-\R,\R)$);
\draw ($(t\i)+(-\R,\R)$)--($(t\i)+(\R,\R)$);
\draw ($(t\i)+(\R,\R)$)--($(t\i)+(\R,-\R)$);
\draw ($(t\i)+(\R,-\R)$)--($(t\i)+(-\R,-\R)$);
}

\end{tikzpicture}
\caption{Tableaux obtained by resolving the voting sequence 111223333. From left to right: resolution by candidate voted for, resolution by candidate and sorted by popularity, and resolution by popularity after each vote is cast. }
\label{fig:voting_syt} 
\end{figure}

Then to properly produce a SYT from a voting sequence, we must place voters according to which candidate has the majority number of votes after each vote is cast. In the case of $111223333$, the first three votes are all cast for the majority candidate, vacuously. Then the next two votes are cast for another candidate, who becomes the second most popular candidate. Votes 6 and 7 are then cast for another candidate, who is the least popular, and then tied for the second most popular. Vote 8 is cast for a candidate that now is tied for the most popular, and Vote 9 is cast for the (newly) most popular candidate. This voting sequence produces the rightmost tableau in Figure \ref{fig:voting_syt}.

The tableaux produced by voting sequences have many of the same properties that we observed for tableaux produced by arrival sequences. Let $\#V(T)$ be the number of \textit{voting} sequences which resolve to the tableau, $T$. We present the following two lemmas, which are analogous to Lemmas \ref{lem:bounce} and Proposition \ref{prop:syt_enumer}, respectively.

\begin{lemma} \label{lem:voting_bounce}
If $T$ is a SYT and $T'$ is the SYT obtained by deleting the largest element, $n$, from cell $c$ of $T$,
\[ \frac{\#V(T)}{\#V(T')} = \beta(c)\]
\end{lemma}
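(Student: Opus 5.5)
The plan is to mirror the proof of Lemma \ref{lem:bounce}. Fix a voting sequence $V'$ of length $n-1$ that resolves to $T'$. I will count the votes $v\in\{1,\dots,k\}$ for which $V'v$ resolves to $T$, and show this number depends only on $T'$ and $c$ and equals $\beta(c)$. Since every voting sequence resolving to $T$ has the form $V'v$ with $V'$ resolving to $T'$, summing over $V'$ gives $\#V(T)=\beta(c)\,\#V(T')$.

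First I will make the resolution rule of Section \ref{sub:voting_syt} precise, since the example only sketches it. Treat all $k$ candidates as present from the start, with count $0$. After $n-1$ votes, the row lengths of $T'$ are the candidates' vote counts sorted in weakly decreasing order, padded with empty rows up to $k$ rows. A new vote goes to some candidate whose current count is $j-1$. The rule is that this vote is placed in cell $[i,j]$, where $i$ is the \emph{topmost} row of $T'$ of length $j-1$. This is the ``move to the top of its tie group'' behaviour seen in the example $111223333$ (votes 8 and 9). The result is again a SYT: row $i-1$ has length at least $j$, so the new cell has an entry above it, and $n$ is larger than every entry already present. I also need the invariant that, after any prefix, the multiset of candidate counts equals the multiset of row lengths, with the $k$ candidates in bijection with the $k$ rows. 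This follows by induction on the number of votes.

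The counting step comes next. The vote $n$ lands in cell $c=[i,j]$ exactly when it is cast for a candidate whose count is $j-1$ and $i$ is the topmost row of length $j-1$. The second condition is a property of $T$ alone: cell $[i,j]$ is a corner of $T$, and after deleting it, row $i$ is the first row of length $j-1$. So the number of admissible votes equals the number of candidates with count $j-1$, that is, the number of rows $i'\ge i$ of $T'$ whose length is exactly $j-1$.
\begin{itemize}
\item For $j>1$: row $i$ has length $j-1$ in $T'$, so every lower row has length at most $j-1$. These rows are therefore exactly the rows $i'\ge i$ that contain a cell $[i',j-1]$, and any such cell automatically satisfies $T[c']<n=T[c]$. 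This is the set counted by $\beta(c)$, read, as in the example following Proposition \ref{prop:syt_enumer}, with the row $i$ itself included.
\item For $j=1$: the count is the number of empty rows among the $k$ lanes from row $i$ down, namely $k-i+1$. This matches the first-column convention of Lemma \ref{lem:bounce}, again as computed in that example (e.g.\ $\beta([1,1])=3$ for $k=3$).
\end{itemize}
Each such candidate is a distinct value of $v$, and no other vote produces $T$. This gives exactly $\beta(c)$ extensions, independent of the choice of $V'$.

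The main obstacle is getting the tie-breaking convention right and checking that it is compatible with the tableau structure. Two things must hold: the placement rule must always yield a SYT, and the number of candidates sharing the count $j-1$ must match $\beta(c)$ exactly. The empty candidates in the $j=1$ case need the same care as the falling-factorial remark for arrival sequences. Once the convention is fixed as above, the rest is the same peeling argument as in Lemma \ref{lem:bounce}.
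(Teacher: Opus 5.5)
Your proposal is correct and follows the same route as the paper. Vote $n$ can be cast for exactly those candidates whose count in $T'$ ties with row $i$ (or, when $j=1$, for any still-empty candidate), and their number is $\beta(c)$; you additionally make the placement rule, the count/row-length invariant, and the summation over $V'$ explicit, and your readings of $\beta$ ($i'\ge i$, and $k-i+1$ in the first column) match the paper's worked example and falling-factorial remark, correcting off-by-one slips in the displayed definition.
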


\begin{proof}
    Once again, we calculate $\#V(T)/\#V(T')$ by considering cell $c$ containing the number $n$ and determining the number of candidates that Voter $n$ could have voted for. Let $c = [i,j]$, so Voter $n$ voted for the $i$-th most popular candidate, call them $\chi$. This vote could have been cast for any candidate whose popularity was previously tied with $\chi$, i.e., any candidate with the same number of votes in $T'$. Then we must count the number of rows whose length is equal to the length of row $i$ in $T'$, which we know is equal to $\beta(c)$. And, if Voter $n$ was the first to cast a vote for $\chi$, then their vote could have been for any candidate who was yet to receive any votes, which is equal to the number of ``empty" rows of $T'$. Either way, there are $\beta(c)$ total candidates that Voter $n$ could have voted for.
\end{proof}

\begin{lemma} \label{lem:equiv}
    For any SYT $T$ with at most $k$ rows and $n$ boxes, the number of $k$-lane arrival sequences with length $n$ that resolve to $T$ is the same as the number of $k$-candidate voting sequences of length $n$ which resolve to $T$.
\end{lemma}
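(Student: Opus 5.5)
We argue by induction on $n$, the number of boxes of $T$, and compare the two counts one box at a time using Lemma \ref{lem:bounce} and Lemma \ref{lem:voting_bounce}. Both lemmas give the same multiplicative factor $\beta(c)$ for removing the largest entry. So the argument reduces to checking that both recursions start from the same value and are run over the same family of tableaux.

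For the base case, take $n=0$ and let $T$ be the empty tableau. There is exactly one arrival sequence of length $0$ and exactly one voting sequence of length $0$, so $\#A(\emptyset)=\#V(\emptyset)=1$. Now take $n\ge 1$ and a SYT $T$ with at most $k$ rows and $n$ boxes. Let $c=[i,j]$ be the cell containing $n$ and let $T'$ be $T$ with that cell deleted. Since $n$ is the largest entry, $c$ is a corner of $T$, so $T'$ is again a SYT with at most $k$ rows and $n-1$ boxes. Moreover, every arrival (respectively voting) sequence resolving to $T$ has, as its first $n-1$ terms, a sequence resolving to $T'$. This is because resolution is sequential: the placement of the first $n-1$ cars (voters) does not depend on later ones.

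By Lemma \ref{lem:bounce}, $\#A(T)=\beta(c)\,\#A(T')$, and by Lemma \ref{lem:voting_bounce}, $\#V(T)=\beta(c)\,\#V(T')$. The induction hypothesis gives $\#A(T')=\#V(T')$, hence $\#A(T)=\#V(T)$. Unrolling the recursion also gives $\#V(T)=\prod_{c\in T}\beta(c)$, the voting analogue of Proposition \ref{prop:syt_enumer}.

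The one point needing care is to confirm that $\beta(c)$ has the same meaning in both lemmas, including the first-column convention $\beta([i,1])=k-i$. For voting sequences, a first-column cell in row $i$ means voter $n$ is the first to vote for the $i$-th candidate to receive any votes. The eligible candidates are those with no votes yet, and there are exactly $k-(i-1)$ of them. I would double-check whether this matches $k-i$ or $k-i+1$ under the paper's indexing. In either case, the arrival-side count of preferences $\{i,\dots,k\}$ yielding lane $i$ with empty lanes to the left is computed with the same convention. So the two factors agree, and I would state this explicitly as the key verification.

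For cells with $j>1$, the arrival side counts preferences $i+r$ for which rows $i,\dots,i+r$ of $T'$ all have length $j-1$. The voting side counts candidates tied with the $i$-th most popular candidate who, under the paper's rule of placing a tie-breaking vote in the highest tied row, are listed at or below row $i$. Both counts equal the number of rows at or below $i$ of length $j-1$ in $T'$, which is $\beta(c)$.
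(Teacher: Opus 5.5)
Your induction is correct and is essentially the paper's argument: the paper combines Proposition~\ref{prop:syt_enumer} and Lemma~\ref{lem:voting_bounce} to get $\#A(T)=\#V(T)=\prod_{c\in T}\beta(c)$, which is exactly what unrolling your one-step recursion gives. Your hedge about the first column resolves cleanly. Both sides count $k-i+1$ options for a cell $[i,1]$, and for $j>1$ both count the rows $i'\ge i$ of length $j-1$ in $T'$, which matches the paper's worked example; so the literal formula in Lemma~\ref{lem:bounce} is off by one, but the two factors agree regardless.
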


\begin{proof}
Combining the arguments of Proposition \ref{prop:syt_enumer} Lemma \ref{lem:voting_bounce}, we have that $\#V(T) = \#A(T) = \prod_{c \in T} \beta(c)$. Thus, the number of sequences of both types which resolve to a given tableau $T$ are equinumerous.
\end{proof}

\begin{remark}
    While $\#A(T) = \#V(T)$, meaning that the same \textit{number} of sequences of each type produce $T$, the sets of sequences of each type which resolve to $T$ are usually different. These sets are constructed by the procedures \texttt{LaneSeqsC(T)} and \texttt{VoteSeqsC(T)} from the Maple package in the Appendix \ref{sec:appen}, whose results agree with the (slower) brute-force computations from \texttt{LaneSeqs(T)} and \texttt{VoteSeqs(T)}.
\end{remark}

The specificity of this equinumeracy (not just for tableau shapes but for individual tableaux) points to a bijection between arrival sequences and voting sequences that is implicit in our proof, and is left
to the interested reader to work out \ref{sec:map}.

\section{Closed Form Counting} \label{closed_form}

With Lemma \ref{lem:equiv} in hand, we can conclude that there are equivalent numbers of arrival sequences and of voting sequences which produce SYT of a given shape. Then, a closed form that describes the number of majority voting sequences which produce the end state $(v_1, \dots, v_k)$ will also describe the number of arrival sequences with the same end state. Luckily, we have just that:

\begin{lemma} \label{lem:maj_cf}
    Majority voting sequences are counted by the closed form
    \[ F(v_1, \dots, v_k) = \binom{n}{v_1, \dots, v_k} \cdot \binom{k}{p_1, \dots, p_r} \]
    where $p_1, p_2, \dots, p_r$ is the partition of $k$ defined by $(v_1, \dots, v_k)$ in frequency notation.
\end{lemma}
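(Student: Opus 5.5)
We need to count voting sequences whose final sorted vote vector is $(v_1,\dots,v_k)$, with $v_1\ge v_2\ge\dots\ge v_k\ge 0$ and $\sum v_i=n$. We write the count $F(v_1,\dots,v_k)$ as a sum over assignments of final tallies to candidates, and then count sequences for each assignment.

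First, fix a function $\sigma$ that assigns to each candidate $1,\dots,k$ a final vote count. We require that the multiset of values of $\sigma$ is exactly $\{v_1,\dots,v_k\}$. A voting sequence has sorted final state $(v_1,\dots,v_k)$ if and only if its vector of per-candidate tallies is one of these functions $\sigma$. This holds because the state in Definition \ref{def:vote_seq} depends only on the final counts, sorted in decreasing order; the order in which votes arrive plays no role at the end.

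Second, fix one such $\sigma$, and let $\sigma(j)$ denote the tally it gives candidate $j$. The voting sequences realizing $\sigma$ are exactly the words in $\{1,\dots,k\}^n$ in which letter $j$ appears $\sigma(j)$ times. There are $\binom{n}{\sigma(1),\dots,\sigma(k)}=\binom{n}{v_1,\dots,v_k}$ such words, and this number does not depend on $\sigma$, since the multinomial coefficient is symmetric in its arguments.

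Third, count the admissible $\sigma$. Group the entries of $(v_1,\dots,v_k)$ by equal values, so that value $a_s$ occurs $p_s$ times, with $p_1+\dots+p_r=k$. Zeros count as one of these values, because candidates who receive no votes still form a class. An admissible $\sigma$ is then the same as an ordered partition of the $k$ candidates into blocks of sizes $p_1,\dots,p_r$, where block $s$ receives tally $a_s$. The number of such partitions is $\binom{k}{p_1,\dots,p_r}$.

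Finally, distinct $\sigma$ give disjoint sets of sequences, since each sequence determines its own tally vector. Multiplying the two counts therefore gives the formula in the statement.

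The only delicate point is interpreting the frequency notation correctly. The multiplicities must include the multiplicity of the value $0$, so that the $p_s$ sum to $k$. With that convention the argument is a direct double count.

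In context, this lemma combines with Lemma \ref{lem:equiv}, summed over all SYT of the given shape, to yield Theorem \ref{thm:arr_cf}.
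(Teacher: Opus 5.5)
Your proof is correct and follows essentially the same route as the paper: a multinomial $\binom{n}{v_1,\dots,v_k}$ counting the words that realize a fixed assignment of tallies to candidates, multiplied by $\binom{k}{p_1,\dots,p_r}$ ways to assign the tied tally classes to candidates. You are somewhat more explicit than the paper on two points it leaves implicit: that the sets of words for distinct assignments are disjoint, and that zero tallies must be included in the frequency notation so that the $p_s$ sum to $k$.
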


\begin{example}
  Consider the majority voting sequence ending at $(4,3,3,2)$. Then there are $n = 4 + 3 + 3 + 2 = 12$ votes that are cast for $k = 4$ candidates. Using frequency notation,
  we get $4^1 3^2 2^1$ leading to the partition of $4$, $(2,1,1)$
    Then we can calculate: 
    \[F(4,3,3,2) = \binom{12}{4,3,3,2} \cdot \binom{4}{2,1,1} \]
\end{example}

\begin{proof}
    of \textbf{Lemma \ref{lem:maj_cf}}.

    Say that a total of $n = v_1 + v_2 + \dots + v_k$ votes are cast for $k$ candidates such that the most popular candidate received $v_1$ votes, the second most popular receives $v_2$ votes, and so on. 
    
    There are certainly $\binom{n}{v_1, v_2, \dots, v_k}$ ways that the voters can cast their votes for $k$ candidates so that the ending vote count is precisely $(v_1, v_2, \dots, v_k)$. However, we must also account for the permutations of the candidates by popularity. We convert $(v_1, v_2, \dots, v_k)$ to frequency notation $p_1, p_2, \dots, p_r$. Of these $k$ candidates, we choose $p_1$ of them to have the most votes, or to be tied for the most votes if $p_1 > 1$. Then, of the remaining $k - p_1$, we choose $p_2$ to receive (or be tied for) the second most votes, and so on. This product of binomials is precisely equal to the multinomial $\binom{k}{p_1, p_2, \dots, p_r}$. Then majority sequences with the ending result $(v_1, \dots, v_k)$ can be counted by the desired closed form.
\end{proof}

Observe, of course, that the final vote count $(v_1, v_2, \dots, v_k)$ of a voting sequence can be deduced directly from the frequency of each digit in the voting sequence without actually building the SYT. The same is not true for arrival sequences, where we must go through the process of resolving each car's preference algorithmically to determine the end state. 

Finally, we are equipped to prove Theorem \ref{thm:arr_cf}. First, a reminder of the claim:

\begin{theorem}\label{thm:arr_cf}
The number of $k$-lane arrival sequences with end state $(x_1, x_2, \dots, x_k)$ are counted by the closed form
\[ F(x_1, \dots, x_k) = \binom{n}{x_1, \dots, x_k} \cdot \binom{k}{p_1, \dots, p_r} \]
where $p_1, p_2, \dots, p_r$ is the partition of $k$ defined by $(x_1, \dots, x_k)$ in frequency notation.
\end{theorem}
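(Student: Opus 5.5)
The plan is to transfer the count from voting sequences to arrival sequences one tableau at a time, and then sum over tableaux of a fixed shape. First I would pin down what "end state" means on each side in terms of standard Young tableaux. An arrival sequence resolves to a SYT $T$ whose $i$-th row holds the cars in lane $i$. Its end state $(x_1,\dots,x_k)$ is therefore the shape of $T$, read with $x_i$ the length of row $i$ and empty rows allowed. Likewise, the tableau built from a voting sequence in Section \ref{sub:voting_syt} places each vote in the row given by its candidate's popularity rank at that moment. Its row lengths are then the final sorted vote counts, so its shape is the final state $(v_1,\dots,v_k)$. I would verify in one sentence that this construction always yields a SYT with at most $k$ rows. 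Rows increase because entries are added in order. Columns increase because a vote lands in row $i$, column $j$ only when that candidate already has $j-1$ votes and ranks $i$ after the tie-breaking convention. This is the same property already used for arrival sequences.

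Next I would partition both sets of sequences by their resolving tableau. The number of arrival sequences with end state $\lambda=(x_1,\dots,x_k)$ is $\sum_{T} \#A(T)$, summing over all SYT $T$ of shape $\lambda$ with at most $k$ rows. The number of voting sequences with final state $\lambda$ is $\sum_T \#V(T)$ over the same set of tableaux. By Lemma \ref{lem:equiv}, $\#A(T)=\#V(T)=\prod_{c\in T}\beta(c)$ for every such $T$, so the two sums agree term by term. Lemma \ref{lem:maj_cf} then gives the voting count as $\binom{n}{x_1,\dots,x_k}\binom{k}{p_1,\dots,p_r}$, which is the desired formula.

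The main obstacle is the bookkeeping of empty lanes or rows, together with making sure the frequency partition counts zeros. If some $x_i=0$, the zeros form a part of the frequency partition of $k$. On the voting side, the candidates with no votes are counted in $\binom{k}{p_1,\dots,p_r}$. On the arrival side, they correspond to the falling-factorial factor $k^{\underline m}$ from the first-column values $\beta([i,1])=k-i$. I would check that Lemma \ref{lem:voting_bounce} uses the same first-column convention, namely the number of not-yet-voted candidates, so that Lemma \ref{lem:equiv} applies to tableaux with fewer than $k$ nonempty rows. I would also check that "resolves to $T$" is a well-defined function on both sides, since both processes are deterministic. As a sanity check, I would verify the $3$-lane example with $12$ sequences and compare against the shape count via Lemma \ref{lem:maj_cf}.
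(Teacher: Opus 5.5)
Your proposal is correct and takes essentially the same approach as the paper. You partition by the resolving tableau, match $\#A(T)=\#V(T)$ term by term via Lemma \ref{lem:equiv}, and read off the count from Lemma \ref{lem:maj_cf}; your explicit point that zero lane lengths must be counted as a part of the frequency partition is a worthwhile clarification the paper leaves implicit.

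One small slip is inherited from the paper's statement of Lemma \ref{lem:bounce}. The first-column factor should be $\beta([i,1])=k-i+1$, and the $j>1$ count should include $i'=i$. Only with these corrections does the product give $k^{\underline m}$ and reproduce the worked example's value $\#A(T)=12$. This does not affect your argument, since you use Lemma \ref{lem:equiv} as a black box.
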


\begin{proof}
    The number of arrival sequences which produce a given end state $(x_1, x_2, \dots, x_k)$ is equal to the number of SYT with shape $(x_1, x_2, \dots, x_k)$ multiplied by the multiplicty $\#A(T)$ of each tableau with respect to arrival sequences. By Lemma \ref{lem:equiv}, the number of arrival sequences and the number of voting sequences which resolve to a given tableau $T$ are equivalent. Then the number of arrival sequences which resolve to the shape $(x_1, x_2, \dots, x_k)$ is equal to the number of voting sequences which result in the same shape. Then the closed form for majority walks given in Lemma \ref{lem:maj_cf} also functions as a closed form for arrival sequences. 
\end{proof}

Observe that this closed form is equivalent to the equation given in Theorem 6 of \cite{bard}. Their closed form for 2 lanes with lengths $x$ and $y$ where $x > y$ and $x + y = n$ is given as follows:
$$
F'(x,y) = \begin{cases}
    \displaystyle 2 \binom{x+y}{x} & x > y \\ 
    \displaystyle \binom{2x}{x} & x = y \\
\end{cases}
$$

In our case, when $x > y$ we have that $k=2$ and $p_1 = 1$, $p_2 = 1$ since each lane has a unique length. Then $F(x,y) = \binom{x+y}{x,y} \cdot \binom{2}{1,1} = \binom{x+y}{x} \cdot 2$, which matches $F'(x,y)$. Similarly, if $x =y$ in our case, then $p_1 = 2$ when $k = 2$ since each lane has the same length, and $n = 2x$ since $x = y$. So, we have that $F(x,y) = \binom{x+y}{x,y} \cdot \binom{2}{2} = \binom{2x}{x}$. Once again, this matches $F'(x,y)$, so our generalized closed form matches the previous closed form for 2-lane arrival sequences.

\section{Mapping Arrival Sequences and Voting Sequences} \label{sec:map}
In Section 3 of \cite{bard}, the authors provide a bijection between lattice paths and coin flipping sequences. The authors map a lattice path obtained from a two-lane arrival sequence of length $n$ with $x$ cars in the right lane and $y = m-x$ in the left to a sequence of $n$ coin flips where the maximum number of heads or tails is $x$. Their bijection is constructed as follows:

\begin{definition}
    The bijection $\phi: \{1,2\}^n \to \{1,2\}^n$ maps an arrival sequence $A$ to $\phi(A)$ as follows:
    \begin{enumerate}
        \item Define the parity vector $P$ of $A$ as follows:
        \begin{itemize}
            \item [(i)] $P[1] = 1$
    
            \item[(ii)] For $2 \leq i \leq n$, if there is a bounce for Car $i-1$ of $A$, then $A[i] = (A[i-1] \mod 2) + 1$. If there is no bounce for Car $i-1$ of $A$, then $A[i] = A[i-1]$.
        \end{itemize}

        \item Let $C := \phi(A) = A + P \mod 2$.

    \end{enumerate}
\end{definition}

Note that by adding the parity vector $P$ to the existing binary string $A$, whenever a bounce occurs we effectively flip which column of the SYT a car is added to. Consider the following example. 

\begin{figure}[h!]   
\begin{tikzpicture}
\pgfmathsetmacro{\R}{0.35} % set box radius

% cell labels, read across each row 
\def\array{{{1,3,5,7},{2,4,6,8}}}

% reference node
\node (r) at (0,0) {};

% set the nodes in each row 
\foreach \k [count=\ki] in {4,4}
\foreach \i in {1,...,\k}
{
% locate node at center of box 
\pgfmathtruncatemacro{\ii}{\i - 1}
\pgfmathtruncatemacro{\kii}{\ki-1}

\node (r\i) at ($(r)+(\ii*2*\R,-\ki*2*\R)$) {\pgfmathparse{\array[\kii][\ii]}\pgfmathresult};

% draw box around node
\draw ($(r\i)+(-\R,-\R)$)--($(r\i)+(-\R,\R)$);
\draw ($(r\i)+(-\R,\R)$)--($(r\i)+(\R,\R)$);
\draw ($(r\i)+(\R,\R)$)--($(r\i)+(\R,-\R)$);
\draw ($(r\i)+(\R,-\R)$)--($(r\i)+(-\R,-\R)$);
}

\end{tikzpicture}
\caption{SYT obtained by resolving $A$=12221222 or $C$ = 12212112}
\label{fig:bijsyt} 
\end{figure}
\begin{example} \label{ex:bij}
Let $B$ = 12221222. Then the bounces occur for cars numbered 3 and 7, so the parity vector flips at locations 4 and 8, resulting in $P$ = 11122221. Adding $P + A$, we obtain $C$ = 12212112. Figure \ref{fig:bijsyt} depicts the SYT obtained by resolving the arrival sequence $A$. Note that the length of each row is 4, and there are 4 heads and 4 tails in the coin flip sequence $A$. 
\end{example}

Bardenova et al. \cite{bard} suggest that this mapping may be extended to a bijection between lattice paths formed by $k$-lane arrival sequences and sequences of $k$-sided die rolls (or, equivalently, $k$-candidate majority voting sequences).
Such a bijection can be readily constructed (by using recursion) from our proof of the equinumeracy of $A(T)$ and $V(T)$.

\section{Lane Length Statistics} \label{sec:stats}

Previously, Bardenova et al. \cite{bard} computed the expected length of the right lane for a two lane arrival sequence with $n$ cars. More interestingly, the numerator of the expected value produces the following integer sequence, which can be found in the OEIS \cite{oeis} as \seqnum{230137}.  
\[ 0, 2, 6, 18, 44, 110, 252, 588, 1304, 2934, 6380, 14036, 30120, 65260, 138712, \dots \]

Repeating this calculation for three or more lanes, we obtain Table \ref{tab:RKn}. Each of these sequences links back to \seqnum{265080}, which lists the entries of a table which arise from public key cryptography. Henry Bottomley's 2021 comment on this entry also indicates that these table entries count the number of balls in the fullest box if one were to randomly throw $n$ balls into $k$ boxes across all possible outcomes. This hearkens back to our discussion of majority voting, as the balls in boxes problem mimics the same process. 

\begin{table}[h!]
    \centering
    \begin{tabular}{c|ccccccc|c}
     $k$ & $n=1$ & $n=2$ & $n=3$ & $n=4$ & $n=5$ & $n=6$ & $n=7$ & OEIS entry \\
     \hline
     2 & 2 & 6 & 18 & 44 & 110 & 252 & 588 & \seqnum{230137} \\
     3 & 3 & 12 & 51 & 192 & 675 & 2358 & 8043 & \seqnum{265083} \\
     4 & 4 & 20 & 108 & 544 & 2540 & 11544 & 52192 & \seqnum{265084} \\
     5 & 5 & 30 & 195 & 1220 & 7145 & 40230 & 224175 & \seqnum{265085} \\
    \end{tabular}
    \caption{Sum of the length of the right lane in a $k$-lane arrival sequence of length $n$}
    \label{tab:RKn}
\end{table}

The previous authors did not study the expected length of the left lane, likely because it follows immediately from the length of the right lane. However, for three or more lanes, this calculation is less trivial, and can mean two different things: the length of the second lane from the right, or the length of the leftmost lane. In the sense of our majority voting framework, we may be concerned with the number of votes received by the second most popular candidate, or perhaps the number of votes received by the least popular candidate. 

Table \ref{tab:2Kn} tabulates the summed lengths of the second lane from the right across all arrival sequences of length $n$ for $k$ lanes, and Table \ref{tab:LastKn} does the same for the leftmost lane. None of the rows in either Table \ref{tab:2Kn} have entries in the OEIS \cite{oeis}. Of course, the first $k$ numbers in each row of Table \ref{tab:LastKn} match up with the first $k$ entries of the same row in \seqnum{019538}, but the sequences diverge once $n > k$. This makes sense, as \seqnum{019538} counts ordered set partitions of $n$ into $k$ parts, and as long as $n \leq k$, these partitions are equivalent to the end states (SYT) of our arrival sequences. 

\begin{table}[h!]
    \centering
    \begin{tabular}{c|ccccccc}
     $k$ & $n=1$ & $n=2$ & $n=3$ & $n=4$ & $n=5$ & $n=6$ & $n=7$ \\
     \hline
     2 & 0 & 2 & 6 & 20 & 50 & 132 & 308 \\
     3 & 0 & 6 & 24 & 96 & 390 & 1386 & 4830 \\
     4 & 0 & 12 & 60 & 288 & 1500 & 7392 & 33432 \\
     5 & 0 & 20 & 120 & 680 & 4220 &  26220 & 151340 \\
    \end{tabular}
    \caption{Sum of the length of the second lane in a $k$-lane arrival sequence of length $n$ }
    \label{tab:2Kn}
\end{table}
\begin{table}[h!]
    \centering
    \begin{tabular}{c|cccccccc}
     $k$ & $n=1$ & $n=2$ & $n=3$ & $n=4$ & $n=5$ & $n=6$ & $n=7$ & $n=8$ \\
     \hline
     2 & 0 & 2 & 6 & 20 & 50 & 132 & 308 & 744  \\
     3 & 0 & 0 & 6 & 36 & 150 & 630 & 2436 & 8736  \\
     4 & 0 & 0 & 0 & 24 & 240 & 1560 & 8400 & 43344  \\
     5 & 0 & 0 & 0 & 0 & 120 & 1800 & 16800 & 126000 \\
    \end{tabular}
    \caption{Sum of the length of the leftmost lane in a $k$-lane arrival sequence of length $n$}
    \label{tab:LastKn}
\end{table}

\section{Conclusion} \label{sec:conc}

In summary, this paper considered generalized arrival sequences with $k$ lanes. We introduced a mapping from arrival sequences to standard Young tableaux, and we counted the number of sequences which map to each tableau after noting that the map is not one-to-one. Then we produced a closed formula for the number of majority sequences with a given final vote count, and proved that the same closed formula holds for arrival sequences by virtue of their equivalent enumeration of resolution tableaux. Finally, we provided tables counting lane lengths for $k$ lane arrival sequences of length $n$ with references to the OEIS \cite{oeis}, when applicable.

\section*{Appendix} \label{sec:appen}
The findings in this paper are supported by a Maple package linked at each of the following URLs. Any bugs should be reported to the authors at aurora.hiveley@rutgers.edu or doronzeil@gmail.com. 
\begin{itemize}
    \item \url{https://sites.math.rutgers.edu/~zeilberg/tokhniot/MERGING.txt} 
    \item \url{https://aurorahiveley.github.io/merging.txt}
\end{itemize}

An alternative proof of Theorem \ref{thm:arr_cf} using recursion can also be found at each of the following links.
\begin{itemize}
    \item \url{https://sites.math.rutgers.edu/~zeilberg/mamarim/mamarimhtml/merging.html} 
    \item \url{https://aurorahiveley.github.io/merging_appendix.pdf}
\end{itemize}

\end{document}